\documentclass[10pt]{amsart}

\title{Metric foliations of homogeneous three-spheres}

\author{Meera Mainkar}
\address{Dept. of Mathematics\\
Central Michigan University\\
Mount Pleasant, MI 48859}

\author{Benjamin Schmidt}
\address{Dept. of Mathematics \\ 
                 Michigan State University \\
                 East Lansing, MI 48824}

\thanks{ The authors appreciate the anonymous referee's careful reading of a preliminary version and suggestions for improvement.  The first author thanks Michigan State University for its hospitality while this research was conducted.}

\date{\today}

\newtheorem{thm}{Theorem}[section]
\newtheorem{lem}[thm]{Lemma}
\newtheorem{cor}[thm]{Corollary}
\newtheorem{prop}[thm]{Proposition}

\theoremstyle{definition}
\newtheorem{rem}{Remark}[section]

\newtheorem{defn}{Definition}[section]

\numberwithin{equation}{section}



\newcommand{\G}{\Gamma}

\newcommand{\n}{\nabla}

\def\Pb{\ifmmode{\Bbb P}\else{$\Bbb P$}\fi}
\def\Z{\ifmmode{\Bbb Z}\else{$\Bbb Z$}\fi}
\def\Q{\ifmmode{\Bbb Q}\else{$\Bbb Q$}\fi}
\def\C{\ifmmode{\Bbb C}\else{$\Bbb C$}\fi}
\def\R{\ifmmode{\Bbb R}\else{$\Bbb R$}\fi}
\def\H{\ifmmode{\Bbb H}\else{$\Bbb H$}\fi}
\def\S{\ifmmode{S^2}\else{$S^2$}\fi}

\def\tr{\operatorname{trace}}
\def\det{\operatorname{det}}

\def\S{\mathcal S}
\def\G{\operatorname{SU(2)}}
\def\lie{\operatorname{su(2)}}

\def\Exp{\operatorname{Exp}}

\usepackage{graphicx} 

\usepackage{amsmath,amsthm,amsfonts,amscd,flafter,epsf}
\usepackage{graphics}
\usepackage{epsfig}
\usepackage{psfrag}

\begin{document}

\begin{abstract}
A smooth foliation of a Riemannian manifold is \textit{metric} when its leaves are locally equidistant and is \textit{homogeneous} when its leaves are locally orbits of a Lie group acting by isometries.  Homogeneous foliations are metric foliations, but metric foliations need not be homogeneous foliations.  

We prove that a homogeneous three-sphere is naturally reductive if and only if all of its metric foliations are homogeneous.
\end{abstract}
\maketitle

\setcounter{secnumdepth}{1}

\setcounter{section}{0}

\section{\bf Introduction}
A smooth foliation of a Riemannian manifold is a \textit{metric foliation} when its leaves are locally equidistant.\footnote{Metric foliations are also referred to as \textit{Riemannian foliations} in the literature.} For example, the fibers of a Riemannian submersion are locally equidistant and so define a metric foliation of the total space.  A smooth foliation (or submersion) is \textit{homogeneous} when, locally, its leaves (or fibers) are orbits of an isometric group action.  If a foliation or submersion is homogeneous, then it is also metric.  It is an interesting problem to determine to what extent the converse holds on a Riemannian manifold with a large isometry group. 

Simply connected constant curvature spaces have isometry groups of largest possible dimension.  Metric foliations of curvature one spheres are either homogeneous or metrically congruent to the Hopf fibration $S^{15} \rightarrow S^{8}(\frac{1}{2})$ \cite{GrGr, LyWi}.  Metric foliations of Euclidean space all arise from Riemannian submersions \cite{FGLT} and are homogeneous if the submersion fibers are connected \cite{GrWa1, GrWa2, SpWe}.  One-dimensional metric foliations of hyperbolic spaces are classified \cite{LeYi} and are mostly inhomogeneous.

The following are a few known results concerning the above problem for Riemannian manifolds with variable curvatures: one-dimensional metric foliations of compact Lie groups, equipped with a bi-invariant metric, are homogeneous \cite{Mu1}; many compact Lie groups, equipped with a bi-invariant metric, are the total space of an inhomogeneous Riemannian submersion \cite{KeSh};  one-dimensional and codimension one metric foliations of the Heisenberg groups $H_n$, equipped with a left-invariant metric, are homogeneous \cite{Mu2, Wa}; Riemannian submersions from $M^3=S^2 \times \mathbb{R}$, equipped with a product Riemannian metric, to a surface are homogeneous \cite{GrTa}.\\

\noindent \textbf{Main Theorem.} \textit{A Riemannian homogeneous three-sphere is naturally reductive if and only if all of its metric foliations are homogeneous.\\}

To prove the Main Theorem, one only has to consider metric foliations with one-dimensional leaves:  a codimension one smooth foliation of the three-sphere has some noncompact leaves \cite{No} and therefore is not a metric foliation with respect to any Riemannian metric \cite{Gh}.

The naturally reductive homogeneous three-spheres are homothetic to a Berger sphere or isometric to a constant curvature sphere \cite{TrVa}.  As a consequence, in order to prove the Main Theorem, it suffices to prove the following two theorems. 

\begin{thm}\label{main1}
A homogeneous three-sphere that is not naturally reductive admits a one-dimensional metric foliation that is not homogeneous.
\end{thm}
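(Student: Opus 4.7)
The plan is to work explicitly with the model $(SU(2), g)$, where $g$ is a left-invariant metric having three pairwise distinct eigenvalues $\lambda_1 > \lambda_2 > \lambda_3 > 0$ on $\mathfrak{su}(2)$, as this describes every non-naturally-reductive homogeneous three-sphere up to isometry. Because the $\lambda_i$ are distinct, a direct computation shows no nontrivial one-parameter subgroup of right translations acts by isometries, so the identity component of $\Isom(SU(2),g)$ is $SU(2)$ acting by left translations. It follows that every homogeneous one-dimensional foliation is the orbit foliation of a right-invariant Killing field $Y^R$ for some $Y \in \mathfrak{su}(2) \setminus \{0\}$; up to equivalence these form an $\mathbb{R}P^2$-family of metric foliations.

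The core task is to exhibit a one-dimensional metric foliation of $(SU(2),g)$ whose tangent line is nowhere proportional to any $Y^R$. Letting $\{e_1, e_2, e_3\}$ denote a $g$-orthonormal eigenbasis of $\mathfrak{su}(2)$, I would search among unit vector fields $X = \alpha_1 e_1^L + \alpha_2 e_2^L + \alpha_3 e_3^L$ with $\alpha_i \in C^\infty(SU(2))$ and $\sum \alpha_i^2 = 1$. Since $|X| = 1$, the map $V \mapsto \nabla_V X$ sends $X^\perp$ to $X^\perp$, and the metric-foliation condition is precisely that this map be skew-symmetric. Via the Koszul formula it translates into a first-order PDE system on $(\alpha_1, \alpha_2, \alpha_3)$ whose coefficients are polynomials in the eigenvalue differences $\lambda_i - \lambda_j$; a direct Koszul computation rules out constant $\alpha_i$ (i.e.\ left-invariant $X$) as a solution when the eigenvalues are distinct, so any metric foliation of this form must involve genuinely non-constant $\alpha_i$.

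To produce a concrete solution I would impose an equivariance ansatz on the $\alpha_i$---for example, requiring them to depend only on a single coordinate transverse to the $e_3^L$-orbits---reducing the PDE system to an ODE whose general solution can be written down explicitly. The resulting local line field is then extended to all of $SU(2)$ using the left-$SU(2)$ invariance of the defining equations, producing a global one-dimensional metric foliation.

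The main obstacle is verifying inhomogeneity: a priori the constructed line field $\mathbb{R} X$ might coincide pointwise with $\mathbb{R} Y^R$ for some hidden Killing $Y^R$, equivalently there might exist a smooth positive function $h: SU(2) \to \mathbb{R}$ making $h X$ a Killing field. This last condition imposes further PDEs beyond the metric-foliation condition, and I would rule them out by computing a concrete transverse invariant of the foliation---such as the mean curvature form, or an eigenvalue of the O'Neill integrability tensor of the associated local Riemannian submersion---which for any Killing-orbit foliation must take a specific algebraic form dictated by the adjoint action of $SU(2)$, but for the constructed foliation demonstrably does not.
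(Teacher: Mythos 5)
Your overall strategy (work in the left-invariant model with three distinct eigenvalues, find a unit vector field whose orthogonal distribution is totally geodesic, then certify inhomogeneity by a transverse invariant such as the mean curvature form) points in the same general direction as the paper's, but it hinges on a claim that is both false and fatal to the plan: you assert that a Koszul computation rules out constant coefficients $\alpha_i$, i.e.\ that no left-invariant unit vector field can span a metric foliation when the eigenvalues are distinct. The paper's proof consists precisely of exhibiting such a left-invariant field. With structure constants $[E_1,E_2]=2xE_3$, $[E_2,E_3]=2yE_1$, $[E_3,E_1]=2zE_2$ and $z<y<x$, set $v_2=\sqrt{(y-z)/(x-z)}$, $v_3=\sqrt{(x-y)/(x-z)}$, and $V=v_2E_2+v_3E_3$. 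Completing to the frame $U=E_1$, $W=-v_3E_2+v_2E_3$, the conditions $g(\nabla_UU,V)=g(\nabla_WW,V)=0$ hold automatically from the Christoffel symbols $\Gamma_{12}^3=x+z-y$, $\Gamma_{23}^1=x+y-z$, $\Gamma_{31}^2=y+z-x$, and the remaining condition $g(\nabla_UW+\nabla_WU,V)=0$ reduces to the single algebraic identity $v_2^2(x-y)=v_3^2(y-z)$, which the constants above are chosen to satisfy. So the constant-coefficient ansatz does not have an empty solution set; it leads directly to the answer, and your proposed detour through a non-constant PDE/ODE ansatz is both unnecessary and left entirely unexecuted (no equations are written down or solved, and the extension and inhomogeneity steps are described only as intentions).

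For the inhomogeneity step, the paper uses the criterion (its Lemma \ref{exact}) that a one-dimensional foliation is homogeneous if and only if its mean curvature form $\omega=g(\nabla_VV,\cdot)$ is closed; here $\nabla_VV=2v_2v_3(x-z)E_1$ and $d\omega(E_2,E_3)=-4v_2v_3\,y(x-z)\neq0$. This avoids your need to identify the full isometry group and classify its Killing fields, a step you assert but do not prove. As written, your proposal does not constitute a proof, and its one concrete mathematical assertion about the constant-coefficient case is incorrect.
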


\begin{thm}\label{main2}
All one-dimensional metric foliations of a three-dimensional Berger sphere are homogeneous.
\end{thm}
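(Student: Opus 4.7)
The plan leverages the rigidity imposed by the Berger sphere's four-dimensional isometry group. The round case is already covered by \cite{GrGr}, so I assume the Berger metric is not of constant curvature; on such a Berger sphere, up to sign, the Hopf vector field $H$ is the unique unit-length Killing vector field (a short direct computation shows the Ad-orbit of any non-Hopf direction in $\mathfrak{su}(2)$ has non-constant length in the Berger inner product). Let $V$ be a local unit vector field tangent to a one-dimensional metric foliation $\mathcal{F}$ of $(S^3, g)$. Since $V^\perp$ is two-dimensional, the metric foliation condition is equivalent to the skew-symmetry of the shape operator $X \mapsto \nabla_X V$ on $V^\perp$, giving $\nabla_X V = f\,JX$ for $X \perp V$, where $J$ is the $\pi/2$-rotation on $V^\perp$ and $f$ is a scalar function. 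Let $\kappa = \nabla_V V$ be the geodesic curvature of the leaves.

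The first step is to dispose of the case $\kappa \equiv 0$. Combined with $\langle \nabla_X V, V\rangle = 0$ (from $|V| = 1$), the above shows that $\nabla V$ is skew-symmetric on all of $TS^3$, so $V$ is a unit Killing field and therefore $V = \pm H$; the foliation $\mathcal{F}$ is the Hopf foliation, which is homogeneous.

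The bulk of the argument lies in the non-geodesic case $\kappa \not\equiv 0$. Here I would decompose $V = \cos\theta\,H + \sin\theta\,W$ relative to the Hopf bundle, with $W$ a local unit horizontal vector field and $\theta$ an angle function. Combining the metric foliation equation, the Killing equation for $H$, and O'Neill's equations for the Hopf Riemannian submersion $S^3 \to S^2$, one derives differential identities relating $\theta$, $W$, $f$, and $\kappa$. Applying the Ricci identity to $\nabla V$ together with the explicit Berger curvature tensor (for which $H$ is a distinguished Ricci eigenvector) should reduce these identities to the assertion that the line field spanned by $V$ is tangent to the orbits of some nonzero Killing field $K$ in the Lie algebra of the isometry group; the leaves of $\mathcal{F}$ then coincide with the orbits of $K$, so $\mathcal{F}$ is homogeneous.

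The main obstacle is this non-geodesic case: producing a single globally defined Killing field from the pointwise data of $V$. Killing fields other than multiples of $H$ have non-constant length on a Berger sphere, so the proportionality factor $|K|$ between $V$ and $K$ is itself a nontrivial function whose variation along leaves must be compatible with $\kappa$. Organizing this globalization --- perhaps by reading off the initial data for $K$ at a single point from $V$ and $\nabla V$ there, and then verifying that the resulting element of the Killing algebra is compatible with $V$ throughout $S^3$ --- is likely the crux of the proof.
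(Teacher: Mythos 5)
Your setup is sound as far as it goes: the observation that the metric foliation condition makes $X\mapsto\nabla_XV$ skew on $V^{\perp}$, the disposal of the totally geodesic case via unit Killing fields, and the identification of the correct target (by Lemma \ref{exact} and Corollary \ref{closed}, homogeneity is equivalent to $\mathcal{F}$ being the orbit foliation of an isometric flow, or equivalently to the mean curvature form $\omega=g(\nabla_VV,\cdot)$ being closed). But the entire non-geodesic case --- which is the theorem --- is a program rather than a proof, and you say so yourself: ``one derives differential identities,'' ``should reduce these identities,'' ``is likely the crux.'' No identity is actually derived, and no mechanism is given for producing the Killing field $K$ or for verifying its compatibility with $V$ globally. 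As it stands there is no argument to evaluate in the main case.

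More importantly, the ingredient you are missing is not another local curvature identity but a \emph{global} dynamical fact about Berger geodesics. The paper first proves (Proposition \ref{GEO}) that every unit-speed geodesic making angle $\theta$ with the Hopf--Killing field $Y_3$ satisfies $c(t+T_\theta)=\phi^{S_\theta}(c(t))$, where $\phi^{s}$ is the Killing flow; since $V^{\perp}$ is totally geodesic, geodesics tangent to $V^{\perp}$ stay in $V^{\perp}$ for all time, and varying the initial angle near $\pi/2$ forces $d\phi^{s}(V^{\perp})=V^{\perp}$ for all small $s$, hence $[Y_3,V]=0$ (Proposition \ref{key2}). This commutation relation, fed into the structure equations (Lemma \ref{equalities}), is what yields $d\omega=0$; one never constructs $K$ directly, and the local existence of the Killing field rescaling $V$ comes for free from Lemma \ref{exact}. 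Your proposed route via the Ricci identity and the Berger curvature tensor is purely local and pointwise, and it is doubtful it can close on its own: Theorem \ref{main1} shows that for nearby (non--naturally reductive) left-invariant metrics the conclusion is false, so any successful argument must use something as sharp as the exact resonance $S_\theta=\alpha(1-\epsilon)T_\theta$ of the Berger geodesic flow, which your sketch never touches.
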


As the three-sphere is closed and simply connected, one-dimensional homogeneous foliations are orbit foliations of \textit{globally} defined isometric flows (see e.g. Corollary \ref{closed} below).  We therefore have the following Corollary of Theorem \ref{main2}.

\begin{cor}
All one-dimensional metric foliations of a three-dimensional Berger sphere are orbit foliations of a globally defined isometric flow.
\end{cor}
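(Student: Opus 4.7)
The plan is to combine Theorem \ref{main2} with the general principle, highlighted in the Introduction and recorded later as Corollary \ref{closed}, that every one-dimensional homogeneous foliation of a closed simply connected Riemannian three-manifold is the orbit foliation of a globally defined isometric flow. Theorem \ref{main2} upgrades ``metric'' to ``homogeneous,'' and this principle upgrades ``homogeneous'' to ``global flow,'' so the two results chain directly to yield the corollary.

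To flesh out the second step, let $\mathcal{F}$ be a one-dimensional homogeneous foliation of a Berger sphere. By definition, each point $p \in S^3$ has an open neighborhood $U$ on which $\mathcal{F}|_U$ is the orbit foliation of a local isometric Lie group action; differentiating this action produces a nowhere vanishing local Killing vector field $X_U$ on $U$ tangent to $\mathcal{F}$. I would then invoke the classical theorem of Nomizu: on a simply connected Riemannian manifold, every local Killing vector field extends uniquely to a global Killing vector field. Since the Berger sphere is diffeomorphic to the simply connected $S^3$, this yields a globally defined Killing vector field $X$ agreeing with $X_U$ near $p$. Compactness of $S^3$ then upgrades the infinitesimal isometry $X$ to a complete one-parameter group of isometries $\{\phi_t\}$.

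The main obstacle is verifying that the orbits of $\{\phi_t\}$ coincide with the leaves of $\mathcal{F}$ globally, rather than only on the neighborhood $U$. I would handle this by a connectedness argument applied to the set $A = \{q \in S^3 : X(q) \in T_q \mathcal{F}\}$. Closedness of $A$ follows from continuity of $X$ together with smoothness of the tangent distribution of $\mathcal{F}$. For openness at a point $q \in A$ with $X(q) \neq 0$, local homogeneity near $q$ supplies a second local Killing field $Y$, nowhere vanishing and tangent to $\mathcal{F}$, whose global Nomizu extension $\widetilde{Y}$ must, by a direct computation using that two nonvanishing Killing fields tangent to the same one-dimensional foliation are proportional by a function $f$ whose differential is forced to vanish by the Killing equation, be a constant multiple of $X$ in a neighborhood of $q$; hence $X$ spans $\mathcal{F}$ near $q$. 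Points where $X$ vanishes are handled by applying the same argument to a second globally extended Killing field arising from local homogeneity at that point, which together with $X$ must span the foliation distribution. Since $A$ is nonempty and $S^3$ is connected, $A = S^3$, and the orbits of $\{\phi_t\}$ are precisely the leaves of $\mathcal{F}$.
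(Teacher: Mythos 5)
Your opening reduction is exactly the paper's proof: Theorem \ref{main2} shows every one-dimensional metric foliation of a Berger sphere is homogeneous, and Corollary \ref{closed} (closed plus simply connected) converts ``homogeneous'' into ``orbit foliation of a globally defined isometric flow.'' Had you stopped after the first paragraph and simply cited Corollary \ref{closed}, the argument would be complete and identical to the paper's.

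The elaboration you give of the second step, however, departs from the paper's argument and contains genuine gaps. First, Nomizu's local-to-global extension theorem for Killing fields requires the metric to be real analytic; this happens to hold for a Berger sphere because homogeneous metrics are analytic, but you neither state nor use this, and the general principle you set out to reprove is asserted for arbitrary smooth closed simply connected manifolds, where Nomizu does not apply. Second, the openness step of your connectedness argument is circular: to conclude that the extension $\widetilde{Y}$ is a constant multiple of $X$ near $q$, you invoke a lemma about two Killing fields both tangent to $\mathcal{F}$ on an open set, but tangency of $X$ on a neighborhood of $q$ is precisely what you are trying to establish; membership $q \in A$ only gives tangency at the single point $q$, which does not control $\nabla X$ at $q$ and hence does not force $X$ to be proportional to $\widetilde{Y}$. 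Third, at a zero of $X$ the orbit of the flow is a point rather than a leaf, and producing a second global Killing field that ``together with $X$'' spans a one-dimensional distribution does not yield the single nowhere-vanishing tangent Killing field the statement requires. The paper's proof of Corollary \ref{closed} sidesteps all of this: simple connectivity orients $\mathcal{F}$ by a global unit field $V$; homogeneity makes the mean curvature form $\omega$ closed (Lemma \ref{exact}); $H^{1}_{dR}(S^3)=0$ makes $\omega$ globally exact, say $\omega=df$; and then $X=e^{-f}V$ is, by the computation in the proof of Lemma \ref{exact}, a globally defined, nowhere-vanishing Killing field tangent to $\mathcal{F}$, whose flow is the desired isometric flow. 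You should either quote Corollary \ref{closed} directly or reproduce this exactness argument in place of the Nomizu route.
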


The paper is organized as follows.  Preliminary material about naturally reductive spaces, homogeneous three-spheres, and one-dimensional metric foliations is summarized in Section 2.  Section 3 discusses Berger spheres as both left-invariant metrics on $\G$ and as naturally reductive spaces.  Theorem \ref{main1} is proved in Section 4 and Theorem \ref{main2} is proved in Section 5.

\section{\bf Preliminaries.\\}

\subsection{Naturally reductive spaces.\\}
In this section, we quickly review naturally reductive spaces.  The Berger spheres, discussed in Section \ref{berger}, are examples of naturally reductive spaces.

Throughout, a \textit{coset space} refers to a smooth manifold $M=G/H$ where $G$ is a Lie group with Lie algebra $\mathfrak{g}$ and $H$ is a closed subgroup of $G$ with Lie subalgebra $\mathfrak{h}$.  Furthermore, the Lie groups $G$ and $H$ are both assumed to be connected, an additional assumption suitable for our purposes. 

Let $\pi:G \rightarrow M$, $g \mapsto gH$, denote the quotient map, $e \in G$ the identity element, and $o=\pi(e)=H \in M$. The map $\pi$ is equivariant with respect to the natural $G$ actions on $G$ and $M$ by left-translations.

For $g \in G$, let $C_g:G \rightarrow G$ and $L_g:M \rightarrow M$ denote conjugation and left-translation by $g$, respectively.  If $h \in H$, then $L_h(o)=o$ and $\pi \circ C_h= L_h \circ \pi.$ Differentiation of this equality at $e \in G$ implies that for each $v \in \mathfrak{g}$ and $h \in H$,\begin{equation}\label{obv2}
d\pi_e(Ad(h)v)=dL_h(d\pi_e(v)).
\end{equation}

\begin{defn}\label{red}
A \textit{reductive decomposition} for the coset space $M=G/H$ is  a vector subspace $\mathfrak{m} \subset \mathfrak{g}$  satisfying 
\begin{enumerate}
\item $\mathfrak{g}=\mathfrak{m} \oplus \mathfrak{h}$, and 
\item $Ad(H)\mathfrak{m} \subset \mathfrak{m}$. 
\end{enumerate} A coset space $M=G/H$ is \textit{reductive} if it admits a reductive decomposition.
\end{defn}

\begin{rem}\label{mu}
By (1),  $d\pi_{e}\vert_{\mathfrak{m}}:\mathfrak{m} \rightarrow T_oM$ is a linear isomorphism.  We let $\mu=d\pi_{e}\vert_{\mathfrak{m}}$ denote this isomorphism. As $H$ is assumed connected, (2) is equivalent to $ad(\mathfrak{h})\mathfrak{m} \subset \mathfrak{m}.$
\end{rem}

If $\mathfrak{m} \subset \mathfrak{g}$ is a reductive decomposition as above, then the isomorphism $\mu$ induces a bijective correspondence between $Ad(H)$-invariant inner products on $\mathfrak{m}$ and $G$-invariant Riemannian metrics on $M$.  This well known fact is derived using equation (\ref{obv2}).  

\begin{defn}\label{natred}
Let $M=G/H$ be a reductive coset space with reductive decomposition $\mathfrak{m} \subset \mathfrak{g}$.  The reductive decomposition $\mathfrak{m}$ is \textit{naturally reductive} if there exists an $Ad(H)$-invariant inner product $\langle \cdot, \cdot \rangle$ on $\mathfrak{m}$ with the additional property that for each $x,y,z \in \mathfrak{m}$, $$\langle [x,y]_{\mathfrak{m}},z\rangle+\langle [x,z]_{\mathfrak{m}},y\rangle=0.$$ 
A coset space $M=G/H$ is \textit{naturally reductive} if it admits a naturally reductive decomposition $(\mathfrak{m},\langle \cdot, \cdot \rangle)$.  A Riemannian manifold is \textit{naturally reductive} if it is isometric to a naturally reductive coset space.
\end{defn}

\begin{rem}\label{homgeo}
Let $M=G/H$ be a naturally reductive homogeneous space with naturally reductive decomposition $(\mathfrak{m},\langle \cdot,\cdot \rangle)$ and let $\exp: \mathfrak{g} \rightarrow G$ denote the Lie exponential map. If $c(t)$ is a geodesic in $M$ with $c(0)=o$ and $c'(0)=v$, then $$c(t)=\pi\circ\exp(t \mu^{-1}(v)),$$ where $\mu$ is defined as in Remark \ref{mu}. This fact will be used to analyze the behavior of geodesics in Berger spheres. 
\end{rem}

\subsection{Riemannian homogeneous three-spheres.\\}

A homogeneous three-sphere is isometric to a left-invariant metric on $\G$ \cite{Se}.  By \cite{Mi}, for each left-invariant metric $g$ on $\G$, there are positive real numbers $x,y,z \in \mathbb{R}$, and a $g$-orthonormal left-invariant framing $\{E_1,E_2,E_3\}$ of $\G$  with structure constants  \begin{equation}\label{structure2}
[E_1,E_2]=2x E_3\,\,\,\,\,\, [E_2,E_3]=2y E_1\,\,\,\,\,\, [E_3,E_1]=2z E_3.
\end{equation}

\begin{rem}\label{isometry}
The symmetric group $Sym(3)$ acts on $\mathbb{R}^3$ by permuting coordinates.  The $Sym(3)$-orbit of $(x,y,z)$ determines the isometry class of the left-invariant metric $g$ \cite{BFSTW}.  
\end{rem}
An isometry class is naturally reductive if and only if at most two of the structure constants are distinct \cite{TrVa}. Those with three equal structure constants have constant sectional curvatures.  Those with two distinct structure constants are homothetic to a Berger sphere as will be described in Section \ref{berger}.  The remaining isometry classes have three distinct structure constants.

Given a left-invariant metric $g$ and orthonormal framing as in (\ref{structure2}), let $\Gamma_{ij}^k=g(\n_{E_i} E_j,E_k)$ denote the Christoffel symbols.  Use Koszul's formula to compute \begin{eqnarray}\label{use}\nonumber
\Gamma_{12}^3=(x+z-y)=-\Gamma_{13}^2,\\ 
\Gamma_{23}^1=(x+y-z)=-\Gamma_{21}^3,\\ \nonumber
\Gamma_{31}^2=(y+z-x)=-\Gamma_{32}^1.
\end{eqnarray}
The remaining Christoffel symbols are zero.  


\subsection{One-dimensional metric foliations.\\}

Let $(M,g)$ be a Riemannian manifold and $\mathcal{F}$ a smooth foliation of $M$.  Recall that $\mathcal{F}$ is \textit{metric} if its leaves are locally equidistant and is \textit{homogeneous} if locally, its leaves are orbits of an isometric group action.

Equivalently, the foliation $\mathcal{F}$ is metric if its orthogonal distribution, $T\mathcal{F}^{\perp}$, is a \textit{totally geodesic} distribution.  In terms of vector fields, $T\mathcal{F}^{\perp}$ is totally geodesic provided that whenever $X,Y$ are sections of $T\mathcal{F}^{\perp}$, then so too is $\n_X Y+\n_Y X$.

We restrict our attention to the case of one-dimensional foliations.  Locally, the foliation $\mathcal{F}$ is oriented with a unit length vector field $V$ tangent to $\mathcal{F}$.  

\begin{defn}
The mean curvature form of a smooth one-dimensional foliation $\mathcal{F}$ is the one form $\omega \in \Omega^1(M)$ defined by $\omega(\cdot)=g(\n_V V,\cdot).$ 
\end{defn}

The mean curvature form does not depend on the choice of a local orientation of $\mathcal{F}$.

\begin{lem}\label{exact}
A smooth one-dimensional foliation of a Riemannian manifold is homogeneous if and only if the mean curvature form of the foliation is closed.
\end{lem}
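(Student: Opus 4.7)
The plan is to characterize homogeneity via the local existence of a Killing field tangent to the leaves and then translate that existence into an equation on the conformal factor relating such a field to the local unit tangent $V$.

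First, I would reduce the statement to a vector-field statement. A one-dimensional foliation is homogeneous near a point precisely when its leaves near that point are the integral curves of a nowhere vanishing Killing field: from a local isometric action with the prescribed orbits, take an infinitesimal generator; conversely, a local Killing field $K$ with $K \ne 0$ integrates (after shrinking the domain) to a local isometric flow whose orbits are the leaves. Any such $K$ may be written $K = fV$ with $f > 0$, so the problem becomes: for which functions $f$ is $fV$ a Killing field?

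The main computation then uses the Killing equation $(\mathcal{L}_K g)(X,Y) = g(\n_X K, Y) + g(\n_Y K, X) = 0$. Plugging in $X = Y = V$ and using $g(V,V) = 1$ yields $V(f) = 0$. Plugging in $Y = V$ with $X \perp V$, together with $g(\n_X V, V) = 0$, yields $X(f) = -f\,\omega(X)$. For $X, Y$ both orthogonal to $V$, the equation becomes $f[g(\n_X V, Y) + g(\n_Y V, X)] = 0$, which is automatic because $\mathcal{F}$ is metric, i.e., because $T\mathcal{F}^{\perp}$ is totally geodesic. Since $\omega(V) = g(\n_V V, V) = \tfrac{1}{2} V(g(V,V)) = 0$, the two nontrivial conditions combine into the single identity $df = -f\,\omega$, equivalently $\omega = -d(\log f)$.

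Thus $fV$ is Killing if and only if $\omega$ is exact on the domain of $f$ with primitive $-\log f$; conversely, given a local primitive $u$ of $-\omega$, the field $e^{u} V$ is a Killing field tangent to $\mathcal{F}$. So homogeneity of $\mathcal{F}$ amounts to local exactness of $\omega$, which by the Poincar\'e lemma is equivalent to $d\omega = 0$. The only delicate point I anticipate is the opening reduction between local isometric actions and local Killing fields; this is standard for one-dimensional foliations and requires nothing beyond shrinking neighborhoods, so essentially all of the work lives in the routine Killing-equation computation above.
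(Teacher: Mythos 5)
Your proof is correct and follows essentially the same route as the paper: the paper likewise reduces homogeneity to the local existence of a Killing field of the form $e^{-f}V$ with $df=\omega$ and then invokes the Poincar\'e lemma, leaving as a ``straightforward computation'' exactly the three-case Killing-equation analysis you spell out. The one point worth flagging is that your third case (both arguments orthogonal to $V$) genuinely uses that $\mathcal{F}$ is metric, a hypothesis absent from the lemma's statement but necessary for the implication $d\omega=0\Rightarrow$ homogeneous (for instance, the foliation of the open upper half-plane by rays from the origin consists of geodesics, so $\omega=0$, yet it is neither metric nor homogeneous); the lemma should therefore be read, as it is applied throughout the paper, with ``metric foliation'' as a standing assumption, and your write-up has the virtue of making explicit where that assumption enters.
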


\begin{proof}
Let $\mathcal{F}$ be a smooth one-dimensional foliation of a Riemannian manifold $M$ and $\omega$ its mean curvature form.  By Poincare's Lemma, $\omega$ is closed if and only if $\omega$ is locally exact.  The one form $\omega$ is locally exact if and only if each point admits a neighborhood $B$ and a smooth function $f:B \rightarrow \mathbb{R}$ satisfying $df=\omega$ on $B$.  A straightforward computation shows that if $V$ is a unit length vector field orienting $\mathcal{F}$ on $B$, then $df=\omega$ on $B$ if and only if the vector field $X=e^{-f} V$ is Killing on $B$, concluding the proof.
\end{proof}

\begin{cor}\label{closed}
If $M$ is a closed and simply connected Riemannian manifold, then a one-dimensional smooth foliation $\mathcal{F}$ of $M$ is homogeneous if and only if $\mathcal{F}$ is the orbit foliation of a globally defined isometric flow on $M$.
\end{cor}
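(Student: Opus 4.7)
The backward implication is immediate from the definitions: if $\mathcal{F}$ is the orbit foliation of a globally defined isometric flow, then the flow gives a local isometric group action whose orbits are the leaves, so $\mathcal{F}$ is homogeneous. The content is in the forward direction, and the plan is to produce a single globally defined Killing vector field whose orbits are the leaves of $\mathcal{F}$ by globalizing the local construction in Lemma \ref{exact}.

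Suppose $\mathcal{F}$ is a homogeneous one-dimensional foliation of $M$, and let $\omega$ be its mean curvature form. By Lemma \ref{exact}, $\omega$ is closed. Since $M$ is simply connected, $\omega$ is exact, so there exists a smooth function $f : M \to \mathbb{R}$ with $df = \omega$ globally. Next, I would observe that the line bundle $T\mathcal{F}$ is orientable: real line bundles on $M$ are classified by $H^{1}(M;\mathbb{Z}/2)$, which vanishes since $M$ is simply connected. Choose a global unit section $V$ of $T\mathcal{F}$ and define the vector field $X = e^{-f}V$ on all of $M$.

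Now I would invoke the computation already carried out in the proof of Lemma \ref{exact}: on any open set on which $V$ is a smooth unit orienting section of $T\mathcal{F}$ and $df = \omega$, the vector field $e^{-f}V$ is Killing. Since the Killing equation $\mathcal{L}_X g = 0$ is a pointwise condition, being Killing on a neighborhood of every point is equivalent to being globally Killing, so $X$ is a globally defined Killing field on $M$. Because $M$ is closed, $X$ is complete and generates a one-parameter isometric flow $\varphi_t$ of $M$. Since $e^{-f} > 0$, the field $X$ is everywhere nonvanishing and everywhere tangent to $\mathcal{F}$, so the orbits of $\varphi_t$ are precisely the leaves of $\mathcal{F}$.

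The only genuine subtlety is the global orientability of $T\mathcal{F}$; once that is secured, everything else is assembling pieces that Lemma \ref{exact} already provides, together with the standard fact that a Killing field on a closed manifold is complete.
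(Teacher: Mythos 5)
Your proposal is correct and follows essentially the same route as the paper: the paper's proof is exactly the observation that simple connectivity gives a global unit field $V$ orienting $\mathcal{F}$ and forces $H^{1}_{dR}(M)=0$, so that the construction $X=e^{-f}V$ from the proof of Lemma \ref{exact} globalizes. You merely spell out the details (orientability of $T\mathcal{F}$ via $H^1(M;\mathbb{Z}/2)=0$, completeness of $X$ on a closed manifold) that the paper leaves implicit.
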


\begin{proof}
As $M$ is simply connected, $\mathcal{F}$ is globally oriented with a unit vector field $V$.  The Corollary follows from the proof of Lemma \ref{exact} since $H_{dR}^{1}(M)=0$.  
\end{proof}

\section{Berger spheres.}\label{berger}
This section reviews and relates different constructions of the Berger spheres with the eventual goal of describing geodesics in a left-invariant model.

\subsection{Berger spheres as left-invariant metrics on SU(2).\\}

Let $S^{3}=\{(z,w) \in \mathbb{C}^2\, \vert\, z\bar{z}+w\bar{w}=1\}$ and $S^1=\{e^{i\theta} \in \mathbb{C}\, \vert \, \theta \in \mathbb{R}\}$.  Let $g_1$ denote the canonical Riemannian metric on $S^3$ induced from the Euclidean inner-product on $\mathbb{R}^4.$  The action $S^1 \times S^3 \rightarrow S^3$ defined by $e^{i\theta} \cdot (z,w) = (e^{i\theta} z, e^{i\theta} w)$ is $g_1$-isometric.  Let $X$ denote the unit-length $g_1$-Killing field generating this action: $X_{(z,w)}=(iz,iw).$  Let $\mathcal{V}$ denote the line field spanned by $X$ and $\mathcal{H}$ the $g_1$-orthogonal distribution to $\mathcal{V}$.

 \begin{defn}[Berger Sphere I]\label{bergerd}
 For $\epsilon>0$, define a Riemannian metric $g_\epsilon$ on $S^3$ as follows: The distributions $\mathcal{V}$ and $\mathcal{H}$ are $g_\epsilon$-orthogonal, $g_\epsilon=\epsilon g_1$ on $\mathcal{H}$, and $g_{\epsilon}=\epsilon^2 g_1$ on $\mathcal{V}$.  A \textit{Berger sphere} is a Riemannian manifold of the form $(S^3,g_\epsilon)$ for some $\epsilon>0$ and $\epsilon \neq 1$.
 \end{defn}
 
 \begin{rem}
 Typically, Berger spheres are defined by solely rescaling vectors tangent to $\mathcal{V}$.  The description in Definition \ref{bergerd} differs from this alternative rescaling by a homothety and is more suitable for our purposes.
 \end{rem}
 
 \begin{rem}\label{iso}
 It is immediate from the description of $g_\epsilon$ that the above circle action is $g_\epsilon$-isometric, or equivalently, that the vector field $X$ is $g_{\epsilon}$-Killing.
  \end{rem}
 
Let $M_2(\mathbb{C})$ denote the set of $2 \times 2$ complex matrices.  For $A \in M_2(\mathbb{C})$, let $A^{*}$ denote the conjugate transpose of $A$ and let $e \in M_2(\mathbb{C})$ denote the identity matrix.  The Lie group $\G=\{A \in M_2(\mathbb{C})\, \vert\, AA^*=A^*A=e,\,\,\, \det(A)=1\}$  has Lie algebra $\lie=T_e\G=\{ A \in M_2(\mathbb{C})\, \vert\, A^*=-A, \,\,\,\, \tr(A)=0\}$. The matrices $$x_1=\begin{pmatrix} 0 & 1\\ -1 & 0  \end{pmatrix},\,\,\,\,\, x_2=\begin{pmatrix} 0 & i\\ i & 0  \end{pmatrix}, \,\,\,\,\, x_3=\begin{pmatrix} i & 0\\ 0 & -i  \end{pmatrix}$$ form a basis of $\lie$ with structure constants  \begin{equation}\label{strut1}
[x_1,x_2]=2x_3\,\,\,\,\,\, [x_2,x_3]=2x_1\,\,\,\,\,\, [x_3,x_1]=2x_2.
\end{equation}

For $i=1,2,3,$ let $X_i$ denote the left-invariant vector field on $\G$ with $X_i(e)=x_i$.  Let $h_1$ be the orthonormalizing metric for the framing $\{X_1,X_2,X_3\}$.  Then $h_1$ is a bi-invariant metric on $\G$.  The map $F:S^3 \rightarrow \G$ defined by $$F((z,w))=\begin{pmatrix} z & -\bar{w}\\ w & \bar{z} \end{pmatrix}$$ is an isometry between $(S^3,g_1)$ and $(\G,h_1)$ with $dF(X)=X_3$ and with $X_1,X_2$ tangent to $dF(\mathcal{H})$. 

\begin{defn}[Berger Sphere II]\label{berger2}
For $\epsilon>0$, let $h_\epsilon$ be the left-invariant metric on $\G$ for which the $X_i$ are $h_\epsilon$-orthogonal, $h_\epsilon(X_3,X_3)=\epsilon^2$, and $h_{\epsilon}(X_1,X_1)=h_{\epsilon}(X_2,X_2)=\epsilon.$
\end{defn}

For each $\epsilon>0$, the map $F:(S^{3},g_\epsilon) \rightarrow (\G,h_\epsilon)$ is an isometry.  By Remark \ref{iso}, the left-invariant vector field $X_3$ is a Killing field for $(\G,h_\epsilon)$, a fact used in the next subsection.   Setting $$Y_1=\epsilon^{-1/2} X_1,\,\,\,\,\, Y_2=\epsilon^{-1/2} X_2,\,\,\,\,\, Y_3=\epsilon^{-1} X_3,$$ the left-invariant vector fields $\{Y_i\}$ constitute an $h_{\epsilon}$-orthonormal framing of $\G$ with structure constants  \begin{equation}\label{strut2}
[Y_1,Y_2]=2Y_3,\,\,\,\,\,\, [Y_2,Y_3]=2\epsilon^{-1}Y_1,\,\,\,\,\,\, [Y_3,Y_1]=2\epsilon^{-1}Y_2.
\end{equation}

\begin{rem}\label{homoth}
As mentioned in Section 2, a naturally reductive homogeneous three-sphere that is not of constant sectional curvatures is isometric to a left-invariant metric on $\G$ which is homothetic to a metric $h_{\epsilon}$ as described above.
\end{rem}

\subsection{Berger spheres as naturally reductive spaces.\\} 
In this subsection, the Berger sphere $(SU(2),h_{\epsilon})$ is shown to be a naturally reductive Riemannian manifold. This fact is used to describe a property of its geodesics in the concluding Proposition \ref{GEO}.

Let $\exp:\lie \rightarrow \G$ denote the Lie exponential map. The left-invariant vector field $X_3$ generates an $h_{\epsilon}$-isometric flow $\Phi^{s}: \G \rightarrow \G$, $s \in \mathbb{R}$, with orbit through $g \in \G$ given by \begin{equation}\label{flow} \Phi^{s}(g)=g\exp(sx_3)=g\begin{pmatrix} e^{is} & 0\\ 0 & e^{-is}  \end{pmatrix}.\end{equation} Let $G=\mathbb{R} \times \G$.  The transitive action $G \times \G \rightarrow \G$ defined by $$((s,g),\bar{g}) \mapsto (s,g)\cdot\bar{g}:= g\bar{g}\begin{pmatrix} e^{-is} & 0\\ 0 & e^{is} \end{pmatrix}$$ is by $h_{\epsilon}$-isometries.  The isotropy group of $e\in \G$ for this action is $$H=\{(s,g) \in G\, \vert\, (s,g) \cdot e=e\}=\{(l, \begin{pmatrix} e^{il} & 0\\ 0 & e^{-il}  \end{pmatrix})\, \vert\, l\in\mathbb{R}\}.$$  The connected group $H$ is closed in $G$.  Let $\Theta:G/H \rightarrow \G$ denote the $G$-equivariant diffeomorphism defined by $$\Theta((s,g)H)=(s,g)\cdot e=g\begin{pmatrix} e^{-is} & 0\\ 0 & e^{is} \end{pmatrix}.$$ Then $\Theta(o)=e$ and the pullback metric $\Theta^{*}h_\epsilon$ is $G$-invariant.  The $G$-invariant metric $\Theta^{*}h_\epsilon$ on the coset space $G/H$ arises from a naturally reductive decomposition $(\mathfrak{m},\langle \cdot, \cdot \rangle)$ that we now describe.

The Lie algebra $\mathfrak{g}=\mathbb{R} \oplus \lie$ of $G$ admits the following basis:
$$b_0=(1,0),\,\,\,\,\,b_1=(0,x_1),\,\,\,\,\,b_2=(0,x_2),\,\,\,\,\,b_3=(0,x_3).$$ Let $\Exp: \mathfrak{g} \rightarrow G$ denote the Lie exponential map and let $u=b_o+b_3$. Then $H=\{ \Exp(lu)\, \vert\, l\in \mathbb{R}\}$  and $\mathfrak{h}=\langle \{u\} \rangle$ is the Lie subalgebra of $\mathfrak{g}$ corresponding to $H$.  Let $v=(\epsilon-1)b_0+\epsilon b_3$ and define $\mathfrak{m}=\langle \{v,b_1,b_2\} \rangle \subset \mathfrak{g}$.  Then $\mathfrak{g}=\mathfrak{m} \oplus \mathfrak{h}$.  Use (\ref{strut1}) to calculate \begin{equation}\label{strut3}
[u,v]=0,\,\,\,\,\,\, [u,b_1]=2b_2,\,\,\,\,\,\, [u,b_2]=-2b_1.
\end{equation}  Conclude that $ad(\mathfrak{h})\mathfrak{m} \subset \mathfrak{m}$ and that $\mathfrak{m}$ is a reductive decomposition for $M=G/H$. 

The $G$-invariant metric $\Theta^{*}h_{\epsilon}$ induces an $Ad(H)$-invariant inner product $\langle \cdot,\cdot \rangle$ on $\mathfrak{m}$.  To determine this inner product, evaluate $$d\Theta_o(\mu(b_1))=x_1,\,\,\,\,\,d\Theta_o(\mu(b_2))=x_2,\,\,\,\,\,d\Theta_o(\mu(v))=x_3,$$ to conclude that $\{b_1,b_2,v\}$ are $\langle \cdot, \cdot \rangle$-orthogonal, that $\langle b_i,b_i\rangle=\epsilon$ when $i=1,2$, and that $\langle v,v\rangle=\epsilon^2$. Calculate \begin{equation}\label{ff}
[v,b_1]_{\mathfrak{m}}=2\epsilon b_2, \,\,\,\, [v,b_2]_{\mathfrak{m}}=-2\epsilon b_1, \,\,\,\, [b_1,b_2]_{\mathfrak{m}}=2v.
\end{equation}  Using the above description of $\langle \cdot,\cdot\rangle$ and (\ref{ff}), it is straightforward to verify that $(\mathfrak{m}, \langle \cdot, \cdot \rangle)$ is a naturally reductive decomposition for the coset space $G/H$.  This naturally reductive decomposition induces the $G$-invariant metric $\Theta^{*}h_{\epsilon}$ by construction.

\begin{rem}\label{rot}
Let $e_1=\epsilon^{-1/2}b_1$, $e_2=\epsilon^{-1/2}b_2$, and $e_3=\epsilon^{-1}v$.  The above analysis shows that $\{e_1,e_2, e_3\}$ is an orthonormal basis of $(\mathfrak{m}, \langle \cdot,\cdot \rangle)$.  This orthonormal basis is carried to the $h_\epsilon$-orthonormal basis $\{Y_1(e),Y_2(e),Y_3(e)\}$ of $\lie$ under the linear isometry $d\Theta_0\circ d\pi_e$.  It follows from (\ref{obv2}) and (\ref{strut3}) that the isotropy action of $H$ on $(T_{e}\G,h_\epsilon)$ is by rotations about the axis spanned by $Y_3(e)$.  
\end{rem}

We conclude this subsection with a proposition about geodesics in the Berger sphere $(\G,h_\epsilon)$.  To this end, let $\theta \in (0,\pi)$ and define

 \begin{eqnarray}\label{ST} \alpha=\alpha_{\theta}=\cos(\theta), &  \beta=\beta_{\theta}=\sin(\theta), & m=m_{\theta}=\sqrt{\alpha^2+\epsilon^{-1}\beta^2},  \end{eqnarray} 
  \begin{eqnarray} T =T_\theta=2\pi m^{-1},  & S=S_\theta=\alpha(1-\epsilon)T.   \nonumber \end{eqnarray} 
 Moreover, recall that the left-invariant vector field $Y_3$ on $\G$ is $h_{\epsilon}$-Killing and of unit length.  In particular, its orbits are $h_{\epsilon}$-geodesics.  Let $$\phi^{s}:(\G,h_\epsilon) \rightarrow (\G,h_{\epsilon}),\,\,\, s\in \mathbb{R},$$ be the isometric flow generated by $Y_3$.  As $Y_3=\epsilon^{-1}X_3$, recalling that $\Phi^{s}$ denotes the flow generated by $X_3$, we have that $\Phi^{s/\epsilon}=\phi^{s}$.  Comparing with (\ref{flow}), the orbit of $\phi^{s}$ through $g\in \G$ is given by $$\phi^{s}(g)=g\begin{pmatrix} e^{i\epsilon^{-1}s} & 0\\ 0 & e^{-i\epsilon^{-1}s}  \end{pmatrix}.$$  

\begin{prop}\label{GEO}
Let $c: \mathbb{R} \rightarrow (\G,h_{\epsilon})$ be a unit speed geodesic.  If $c'(0)$ and $Y_3(c(0))$ make angle $\theta \in (0,\pi)$, then $$c(t+T_{\theta})=\phi^{S_{\theta}}(c(t)).$$ 
\end{prop}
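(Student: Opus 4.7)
The plan is to combine the naturally reductive geodesic formula (Remark \ref{homgeo}) with the isotropy description (Remark \ref{rot}) and reduce the identity to an explicit matrix exponential in $\G$. Since $\phi^s(g) = g\exp(\epsilon^{-1}sx_3)$ is right multiplication by a fixed element, it commutes with every left translation on $\G$, so replacing $c$ by $L_{c(0)^{-1}}\circ c$ preserves the claimed identity; I may therefore assume $c(0) = e$. Elements of the isotropy $H$ act on $\G$ by conjugation by elements of the maximal torus generated by $x_3$, which also commutes with $\phi^s$, so using Remark \ref{rot} I may further normalize $c'(0) = \beta Y_1(e) + \alpha Y_3(e)$.

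\textbf{Explicit form of $c$.} Applying Remark \ref{homgeo} on $G/H$ and transporting via the isometry $\Theta$, one has $c(t) = \Theta(\Exp(tw)H)$ where $w \in \mathfrak{m}$ corresponds to $c'(0)$. Translating through Remark \ref{rot} and the definition of $v$, I find $w = \sigma b_0 + \beta\epsilon^{-1/2} b_1 + \alpha b_3$ with $\sigma := \alpha(1 - \epsilon^{-1})$. Since $G = \mathbb{R}\times \G$ splits and $\Theta((s,g)H) = g\exp(-sx_3)$, this gives
\[
c(t) = \exp(tA)\,\exp(-t\sigma x_3), \qquad A := \beta\epsilon^{-1/2} x_1 + \alpha x_3.
\]

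\textbf{Closure.} The matrix $A$ is traceless with $\det A = m^2$, so $A^2 = -m^2 I$ and $\exp(tA) = \cos(tm) I + m^{-1}\sin(tm) A$; in particular $\exp(TA) = I$, so $c(T) = \exp(-T\sigma x_3)$. Since $\epsilon^{-1}S = \alpha(\epsilon^{-1} - 1)T = -T\sigma$, this equals $\phi^S(e)$, establishing $c(T) = \phi^S(c(0))$. To promote this to $c(t+T) = \phi^S(c(t))$ for all $t$, both curves are unit speed geodesics (as $\phi^S$ is an isometry), so it suffices to match initial velocities; differentiating the explicit formula at $t = T$ and using $\exp(TA) = e$ together with $[x_3, c(T)] = 0$ gives $c'(T) = (A - \sigma x_3)\,c(T) = d\phi^S_e(c'(0))$, and uniqueness of geodesics concludes. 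The only real obstacle is bookkeeping the chain of identifications $\mathfrak{m} \leftrightarrow T_o(G/H) \leftrightarrow T_e\G \leftrightarrow \lie$ so that the computation closes; the constants $m$, $T$, $S$ in (\ref{ST}) are precisely tuned to achieve $\exp(TA) = e$ and to align the residual torus factor with $\phi^S$.
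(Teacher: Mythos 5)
Your proposal is correct and follows essentially the same route as the paper: normalize $c(0)=e$ and rotate $c'(0)$ into a coordinate plane using the isometries of $G=\mathbb{R}\times\G$, invoke Remark \ref{homgeo} to write $c(t)=\Exp(tw)\cdot e$, split the exponential over the direct product so that the $\G$-factor is $T_\theta$-periodic, and identify the residual diagonal factor with $\phi^{S_\theta}$. The only differences are cosmetic — you use Cayley--Hamilton to see $\exp(T A)=I$ where the paper writes out the matrix entries $f(t),g(t)$ explicitly, and you extend from $t=0$ to all $t$ via uniqueness of geodesics where the paper reads the identity off the explicit formula directly.
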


\begin{proof}
Let $\alpha, \beta, m, T, S$ be as defined in (\ref{ST}).  By applying isometries from $G=\mathbb R \times \G$, we may assume that $c(0)=e$ and that $c'(0)=\alpha Y_3(e)+\beta Y_2(e)$.  Under the inverse of $d\Theta_o \circ \mu:\mathfrak{m} \rightarrow T_e \G$, the vector $c'(0)$ maps to the vector $x:=\alpha e_3+ \beta e_2 \in \mathfrak{m}$.  
Let $\Exp: \mathfrak{g} \rightarrow G$ denote the Lie exponential map.  By Remark \ref{homgeo}, $c(t)=\Theta (\pi (\Exp(tx)))=\Exp(tx)\cdot e.$

Define complex valued functions $f(t)=\cos(tm)+i\frac{\alpha\sin(tm)}{m}$ and $g(t)=i\frac{\beta\sin(tm)}{\epsilon^{1/2}m}.$  For all $t \in \mathbb{R}$, $f(t+T)=f(t)$ and $g(t+T)=g(t)$. Verify that $$\Exp(tx)=(-\alpha(\epsilon^{-1}-1)t,\begin{pmatrix} f(t) & g(t) \\ \\ g(t) & \overline{f(t)}  \end{pmatrix}).$$ Therefore, $$c(t)=\begin{pmatrix} f(t) e^{i\alpha(\epsilon^{-1}-1)t} & g(t) e^{-i\alpha(\epsilon^{-1}-1)t}\\ \\ g(t) e^{i\alpha(\epsilon^{-1}-1)t} & \overline{f(t)} e^{-i\alpha(\epsilon^{-1}-1)t} \end{pmatrix}$$ and 

\begin{eqnarray} c(t+T)& = &\begin{pmatrix} f(t+T) e^{i\alpha(\epsilon^{-1}-1)(t+T)} & g(t+T) e^{-i\alpha(\epsilon^{-1}-1)(t+T)}\\ \\ g(t+T) e^{i\alpha(\epsilon^{-1}-1)(t+T)} & \overline{f(t+T)} e^{-i\alpha(\epsilon^{-1}-1)(t+T)} \end{pmatrix}\nonumber\\\nonumber \\ &=& \begin{pmatrix} f(t) e^{i\alpha(\epsilon^{-1}-1)(t+T)} & g(t) e^{-i\alpha(\epsilon^{-1}-1)(t+T)}\\ \\ g(t) e^{i\alpha(\epsilon^{-1}-1)(t+T)} & \overline{f(t)} e^{-i\alpha(\epsilon^{-1}-1)(t+T)}\end{pmatrix}\nonumber \\ \nonumber \\ &=& c(t) \begin{pmatrix} e^{i\alpha(\epsilon^{-1}-1)T} & 0\\ \\ 0 & e^{-i\alpha(\epsilon^{-1}-1)T} \end{pmatrix} \nonumber \\ \nonumber \\ &=& c(t) \begin{pmatrix} e^{i\epsilon^{-1}S} & 0\\ \\ 0 & e^{-i\epsilon^{-1}S} \end{pmatrix}\nonumber \\ \nonumber \\ &=& \phi^{S}(c(t)).\nonumber 
\end{eqnarray}

\end{proof}

\section{\bf Proof of Theorem \ref{main1}.}

\noindent \textit{Proof of Theorem \ref{main1}.}
Let $M$ be a Riemannian homogeneous three-sphere that is not naturally reductive.  Then there are distinct positive real numbers $x,y,z$ such that $M$ is isometric to a left-invariant metric $g$ on $\G$ admitting an orthonormal left-invariant framing $\{E_1,E_2,E_3\}$ with structure constants as in (\ref{structure2}).  Up to relabeling, we may assume that $z<y<x<0$.  

Define nonzero constants $v_2=\sqrt{\frac{y-z}{x-z}}$ and $v_3=\sqrt{\frac{x-y}{x-z}}$.  Note that $v_2^2+v_3^2=1$ and that 
\begin{equation}\label{key}
v_2^2(x-y)=v_3^2(y-z).
\end{equation}

Define a smooth one-dimensional foliation $\mathcal{F}$ as the orbit foliation of the left-invariant vector field $V=v_2E_2+v_3E_3$.  We will show that $\mathcal{F}$ is a metric foliation that is not homogeneous.

Complete $V$ to an orthonormal framing $\{V,U,W\}$ defined by  \begin{equation}\label{vf}
V=v_2E_2+v_3E_3,\,\,\,\,\,\,\, U=E_1,\,\,\,\,\,\,\,W=-v_3E_2+v_2E_3.
\end{equation}
Use (\ref{use}) to calculate \begin{eqnarray}\label{use3}\nonumber
\,\,\,\,\,\,\,\,\,\,\,\,\,\,\,\,\,\,\, \n_U W=-v_2(x-y+z)E_2-v_3(x-y+z)E_3, && \n_U U=0,  \\  
\n_W U=v_2(-x+y+z)E_2+v_3(x+y-z)E_3, && \n_W W=2v_2v_3(z-x)E_1,\\ \nonumber
 \n_V V=2v_2 v_3(x-z)E_1. \hspace{3.2cm} &&  \\ \nonumber
\end{eqnarray}

Use (\ref{vf}) and (\ref{use3}) to verify that $$g(\n_U U,V)=g(\n_W W,V)=0.$$ Use (\ref{key})-(\ref{use3}) to verify that $g(\n_U W+\n_W U,V)=0.$  
Conclude that $T\mathcal{F}^{\perp}$ is totally geodesic, or equivalently, that $\mathcal{F}$ is a metric foliation.  Let $\omega(\cdot)=g(\n_V V, \cdot)$ be the mean curvature form of $\mathcal{F}$.  Calculate $$d\omega(E_2,E_3)=E_2\omega(E_3)-E_3\omega(E_2)-\omega([E_2,E_3])=-g(\n_V V,2yE_1)=-4v_2v_3y(x-z).$$  Conclude that $\omega$ is not closed and by Lemma \ref{exact} that $\mathcal{F}$ is not homogeneous.
\qed

\section{\bf Proof of Theorem \ref{main2}.}
This section consists of the proof of Theorem \ref{main2}; the proof is presented at the end of the section after a number of preliminary results are derived.  Throughout this section, we let $(M,g)=(\G,h_{\epsilon})$ and let $\mathcal{F}$ denote a one-dimensional metric foliation of $M$. Recall from (\ref{strut2}) that $M$ admits an orthonormal framing $\{Y_1,Y_2,Y_3\}$ with structure constants 
\begin{equation}\label{strut9}
[Y_1,Y_2]=2Y_3\,\,\,\,\,\, [Y_2,Y_3]=2\epsilon^{-1}Y_1\,\,\,\,\,\, [Y_3,Y_1]=2\epsilon^{-1}Y_2.
\end{equation} Moreover, the vector field $Y_3$ is a Killing field.  In particular, its orbits are geodesics in $M$.  By (\ref{use}), the nonzero Christoffel symbols for this framing are given by \begin{eqnarray}\label{use2}\nonumber
&\Gamma_{12}^3=1=-\Gamma_{13}^2,&\\ 
&\Gamma_{23}^1=1=-\Gamma_{21}^3,&\\ \nonumber
 & \hspace{1.4cm} \Gamma_{31}^2=(2\epsilon^{-1}-1)=-\Gamma_{32}^1.
\end{eqnarray}

As $M$ is simply connected, $\mathcal{F}$ is oriented with a globally defined unit length vector field $V$ tangent to $\mathcal{F}$.  Let $\omega(\cdot)=g(\n_V V,\cdot)$ be the mean curvature one form of $\mathcal{F}$.  Our eventual goal is to prove that $\omega$ is closed.  We begin with some preliminary results.

\begin{defn}
Define a subset $\mathcal{O} \subset M$ by $\mathcal{O}=\{p \in M\, \vert\, V \neq \pm Y_3\}.$  
\end{defn}

There exist smooth functions $\psi:\mathcal{O} \rightarrow (0,\pi)$ and $\nu:\mathcal{O} \rightarrow \mathbb{R}/2\pi\mathbb{Z}$ such that \begin{equation}
\label{v}
V=\sin(\psi) \cos(\nu) Y_1+\sin(\psi)\sin(\nu) Y_2+\cos(\psi)  Y_3.
\end{equation} Define vector fields $W$ and $U$ on $\mathcal{O}$ by \begin{equation}
\label{w}
W=\cos(\psi)\cos(\nu)Y_1+\cos(\psi)\sin(\nu) Y_2-\sin(\psi) Y_3,
\end{equation} \begin{equation}
\label{u}
U=-\sin(\nu) Y_1+\cos(\nu) Y_2.
\end{equation} The vector fields $\{V,W,U\}$ constitute an orthonormal framing over $\mathcal{O}$.  

\begin{prop}\label{key2}
On $\mathcal{O}$, $[Y_3,V]=[Y_3,U]=[Y_3,W]=0.$  
\end{prop}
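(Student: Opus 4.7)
The plan is to establish the three commutator identities by showing that the isometric flow $\phi^s$ generated by $Y_3$ preserves the foliation $\mathcal{F}$ near each point of $\mathcal{O}$, then extract the identities by differentiation at $s=0$. The main tool is Proposition \ref{GEO}.

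First I would fix $p \in \mathcal{O}$ and analyze unit horizontal vectors at $p$. Writing such a vector as $v = \cos\alpha\, W(p) + \sin\alpha\, U(p)$, its angle $\theta$ with $Y_3(p)$ satisfies $\cos\theta = -\cos\alpha\,\sin\psi(p)$, since $g(W, Y_3) = -\sin\psi$ and $g(U, Y_3) = 0$. Because $\sin\psi(p) > 0$ on $\mathcal{O}$, for each $\theta$ in the interval $(\arccos(\sin\psi(p)), \pi - \arccos(\sin\psi(p)))$ with $\theta \neq \pi/2$ there are exactly two linearly independent horizontal unit vectors $v_\pm$ at $p$ making angle $\theta$ with $Y_3(p)$, symmetric about the $W$-axis. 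Applying Proposition \ref{GEO} to the unit-speed geodesics $c_\pm$ with $c_\pm'(0) = v_\pm$: because both make angle $\theta$ with $Y_3(p)$, they share $T_\theta$ and $S_\theta$, giving $c_\pm(T_\theta) = \phi^{S_\theta}(p)$ and $c_\pm'(T_\theta) = d\phi^{S_\theta}(v_\pm)$. The metric foliation condition forces horizontal geodesics to remain horizontal, so $d\phi^{S_\theta}(v_\pm) \in \mathcal{F}^\perp(\phi^{S_\theta}(p))$. Linear independence of $v_\pm$ then makes $d\phi^{S_\theta}$ a linear isomorphism from $\mathcal{F}^\perp(p)$ onto $\mathcal{F}^\perp(\phi^{S_\theta}(p))$.

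Next, as $\theta$ varies in the allowed set, $S_\theta$ varies continuously and equals $0$ at $\theta = \pi/2$, so it sweeps an open interval around $0$. Using continuity of $\psi$ to uniformize over a small neighborhood of $p$ yields $\delta > 0$ such that $d\phi^s(\mathcal{F}^\perp(q)) = \mathcal{F}^\perp(\phi^s(q))$ for all $q$ near $p$ and $|s| < \delta$. Equivalently, $\phi^s$ preserves the line field $\mathcal{F}$ near $p$. Since $V$ has unit length and $\phi^s$ is isometric, $d\phi^s(V(q)) = V(\phi^s(q))$ (the sign ambiguity resolves by continuity and the case $s = 0$). Differentiating at $s = 0$ yields $[Y_3, V](p) = 0$, and hence $[Y_3, V] = 0$ on all of $\mathcal{O}$. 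For $W$ and $U$, since $\phi^s$ additionally preserves $Y_3$ and orientation, it preserves the vector fields $W$ and $U$ (which are uniquely determined by $V$, $Y_3$, and orientation), so $[Y_3, W] = [Y_3, U] = 0$ follow by the same differentiation.

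The main obstacle is the pairing step: a single horizontal geodesic from $p$ supplies only one horizontal direction that $d\phi^{S_\theta}$ maps into $\mathcal{F}^\perp$, which is insufficient to conclude that $d\phi^{S_\theta}$ preserves the full 2-plane $\mathcal{F}^\perp(p)$. The essential observation is that the angle-$\theta$ condition admits two reflected solutions among horizontal unit vectors, and Proposition \ref{GEO} depends only on this angle; this supplies two geodesics with identical closure time $T_\theta$ and Killing shift $S_\theta$, whose tangents at time $T_\theta$ together span $\mathcal{F}^\perp(\phi^{S_\theta}(p))$.
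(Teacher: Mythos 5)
Your argument follows essentially the same route as the paper's proof: both exploit the pair of reflected horizontal unit vectors at $p$ making a common angle $\theta$ with $Y_3(p)$, apply Proposition \ref{GEO} to both geodesics to conclude that $d\phi^{S_\theta}$ carries $\mathcal{F}^\perp(p)$ onto $\mathcal{F}^\perp(\phi^{S_\theta}(p))$, and then vary $\theta$ near $\pi/2$ so that the shifts $S_\theta$ fill out a neighborhood of $s=0$, after which the identities follow by differentiating the flow invariance. The pairing observation you single out at the end is indeed the crux, and it is exactly the paper's device.

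The one step whose justification is incomplete is the claim that $S_\theta$ ``sweeps an open interval around $0$.'' Continuity of $\theta\mapsto S_\theta$ together with $S_{\pi/2}=0$ does not by itself give this: a continuous function vanishing at a point can be identically zero there (as happens when $\epsilon=1$) or have a critical point and cover only a one-sided neighborhood, in which case you would not obtain $d\phi^{s}(\mathcal{F}^\perp)=\mathcal{F}^\perp\circ\phi^s$ for all $s$ near $0$. You need to verify from (\ref{ST}) that $\frac{dS_\theta}{d\theta}(\pi/2)=2\pi(\epsilon-1)\epsilon^{1/2}\neq 0$; this is precisely where the Berger hypothesis $\epsilon\neq 1$ enters the argument, and the paper records this computation explicitly. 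With that one line added, your proof is complete and coincides with the paper's.
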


\begin{proof}
We begin by proving that $[Y_3,V]=0$.  Let $\phi^s:M \rightarrow M$, $s \in \mathbb{R}$, denote the isometric flow generated by $Y_3$.  Fix $p \in \mathcal{O}$. For $s\in \mathbb{R}$, let $V_s=V(\phi^{s}(p))$ and $V_s^{\perp}=V^{\perp}(\phi^{s}(p))$.  As $$[Y_3,V](p)=(\mathcal{L}_{Y_3} V)(p)=\lim_{s\rightarrow 0} \frac{d\phi^{-s}(V_s)-V_0}{s},$$ it suffices to prove that $d\phi^{s}(V_0)=V_s$ for all $s$ in some interval about $0$.  As the flow $\phi^s$ is isometric, it suffices to prove that $d\phi^{s}(V_0^{\perp})=V_s^{\perp}$ for all $s$ in a neighborhood of $0$. 

For $\xi \in \mathbb{R}$ close to $0$, define vectors $v_1^{\xi},v_2^{\xi} \in V_0^{\perp}$ by $$v_1^{\xi}=\cos(\xi)U_p+\sin(\xi)W_p, \hspace{.4cm}$$ $$v_2^\xi=-\cos(\xi) U_p+\sin(\xi)W_p.$$  The vectors $v_1^{\xi}$ and $v_2^{\xi}$ make equal angle $\theta(\xi)=\arccos(-\sin(\xi)\sin(\psi_p))$ with $Y_3$.  For $i=1,2$, let $c_i^{\xi}(t)$ denote the geodesic with initial velocity $v_i^{\xi}$.  As $\mathcal{F}$ is a metric foliation, $V^{\perp}$ is a totally-geodesic distribution.  Therefore, the geodesics $c_i^{\xi}$ remain tangent to $V^{\perp}$ for all time.  By Proposition \ref{GEO}, it follows that $$d\phi^{S_{\theta(\xi)}}(V_0^{\perp})=V_{S_{\theta(\xi)}}^{\perp}.$$  

Note that $\theta(\xi)$ carries a neighborhood of $\xi=0$ to a neighborhood of $\theta=\pi/2$.  By formula (\ref{ST}), $S_{\pi/2}=0$ and $\frac{dS_{\theta}}{d\theta}(\pi/2)=2\pi(\epsilon-1)\epsilon^{1/2}$.  As $\epsilon \neq 1$, it follows that $S_{\theta(\xi)}$ carries a neighborhood of $\xi=0$ to a neighborhood of $S=0$, concluding the proof that $[Y_3,V]=0$.

The above proof established that the flow $\phi^{s}$ preserves the distribution $V^{\perp}$.  As it is isometric, it also preserves the distribution $Y_3^{\perp}$. Therefore, $\phi^{s}$ preserves the line field $V^{\perp} \cap Y_3^{\perp}$.  Since $U$ is tangent to this line field and has constant length, $[Y_3,U]=0$.  As $\phi^{s}$ is isometric, the remaining vector field  $W$ in the orthonormal framing $\{V,U,W\}$ is preserved, implying $[Y_3,W]=0$. 
\end{proof}

\begin{lem}\label{equalities}
The following hold on $\mathcal{O}$.

\begin{enumerate}
\item $Y_3(\psi)=0$.
\item $Y_3(\nu)+2\epsilon^{-1}=0$.
\item $W(\psi)=0$.
\item $U(\psi)+\sin(\psi)W(\nu)+(2-2\epsilon^{-1})\sin^2(\psi)=0.$
\item $V(\psi)=0.$
\item $g(U,[U,V])=0.$
\item $V(U(\psi))=0.$
\item $Y_3(V(\nu))=0.$
\item The set $\{p \in \mathcal{O}\, \vert\, \psi(p)=\pi/2\}$ has empty interior.
\item $V(V(\nu))=0.$
\end{enumerate}
\end{lem}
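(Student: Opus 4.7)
The plan is to prove the ten identities using three core ingredients: (i) the Killing property of $Y_3$, which yields $X(g(A,B)) = g([X,A],B) + g(A,[X,B])$ for $X = Y_3$; (ii) Proposition~\ref{key2}, giving $[Y_3,V] = [Y_3,U] = [Y_3,W] = 0$; and (iii) the metric foliation condition, expressed in the orthonormal framing $\{V,U,W\}$ as $g(\nabla_W V,W) = g(\nabla_U V,U) = 0$ together with $g(\nabla_U V,W) + g(\nabla_W V,U) = 0$. Throughout I would repeatedly use the decomposition $Y_3 = \cos\psi\,V - \sin\psi\,W$ (immediate from (\ref{v}) and (\ref{w})), and expand $\nabla_X V$ in the $\{Y_1,Y_2,Y_3\}$ framing using the Christoffel symbols in (\ref{use2}).

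Items (1), (2), (6), and (8) are essentially formal. For (1), apply the Killing identity to $\cos\psi = g(V,Y_3)$; for (2), apply it to $\sin\nu = -g(U,Y_1)$ (together with $\cos\nu = g(U,Y_2)$ to cover the zeros of $\sin\nu$), using the brackets from (\ref{strut9}). Item (6) is one line: $g(U,[U,V]) = g(U,\nabla_U V) - g(U,\nabla_V U) = 0 - 0$, the first term vanishing by the metric foliation condition and the second since $|U| = 1$. Item (8) reads $Y_3(V(\nu)) = V(Y_3(\nu)) + [Y_3,V](\nu) = V(-2\epsilon^{-1}) + 0 = 0$, using (2) and Proposition~\ref{key2}.

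For (3), (4), and (9), the work is a direct coordinate computation. Expanding $\nabla_W V$ and $\nabla_U V$ in $\{Y_1,Y_2,Y_3\}$ and simplifying (repeatedly using $\sin^2\psi + \cos^2\psi = \sin^2\nu + \cos^2\nu = 1$) yields the clean formulas $g(\nabla_W V,W) = W(\psi)$ and $g(\nabla_U V,W) + g(\nabla_W V,U) = U(\psi) + \sin\psi\,W(\nu) + (2-2\epsilon^{-1})\sin^2\psi$, which together with the metric foliation condition produce (3) and (4) at once. For (9), I would argue by contradiction: if $\psi \equiv \pi/2$ on an open subset of $\mathcal{O}$, then $W = -Y_3$ there, and a short computation (using (2) to evaluate $\nabla_{Y_3}V$) gives $\nabla_W V = U$ and $\nabla_U V = U(\nu)\,U + W$; consequently $g(\nabla_U V,W) + g(\nabla_W V,U) = 1 + 1 = 2$, contradicting the metric foliation hypothesis.

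The remaining three identities chain together. For (5), applying $Y_3 = \cos\psi\,V - \sin\psi\,W$ to $\psi$ and using (1) and (3) gives $\cos\psi\,V(\psi) = 0$; item (9) and continuity then promote this to $V(\psi) \equiv 0$ on $\mathcal{O}$. For (7), use (6) to write $[U,V] = \mu V + \eta W$ for some functions $\mu,\eta$, so $V(U(\psi)) = U(V(\psi)) - [U,V](\psi) = -\mu V(\psi) - \eta W(\psi) = 0$ by (3) and (5). For (10), apply $V$ to (4) and use (5) to kill $V(\sin\psi)$-terms and (7) to kill $V(U(\psi))$; only $\sin\psi\,V(W(\nu))$ survives, giving $V(W(\nu)) = 0$. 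Then apply $V$ to the rewritten form $\cos\psi\,V(\nu) - \sin\psi\,W(\nu) = -2\epsilon^{-1}$ of (2): using (5) and $V(W(\nu)) = 0$, one obtains $\cos\psi\,V(V(\nu)) = 0$, whence (9) and continuity yield (10). The main obstacle is (9): it requires an actual geometric computation, pinpointing why the metric foliation condition must fail on any open set where $V$ lies entirely in $\mathcal{H}$; once (9) is in hand, everything else is essentially bookkeeping.
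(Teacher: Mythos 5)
Your proposal is correct and follows essentially the same route as the paper: the same totally geodesic conditions on $V^{\perp}$, the same decomposition $Y_3=\cos(\psi)V-\sin(\psi)W$, and the same chain of dependencies among the ten identities. The only notable (and harmless) variations are that you derive (1) and (2) via the Killing identity rather than by expanding $[Y_3,V]=0$ in the $\{Y_i\}$ framing, and that you handle the locus $\cos(\psi)=0$ in (5) by density via (9) rather than by the paper's trick of applying $V$ a second time to $\cos(\psi)V(\psi)=0$.
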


\noindent \textit{Proof of (1) and (2).} By Proposition \ref{key2}, $[Y_3,V]=0$.  Substitute (\ref{v}) into this equality and simplify using (\ref{strut9}) and (\ref{use2}).\\

\noindent \textit{Proof  of (3):}  As $V^{\perp}$ is totally geodesic, $g(\n_W W, V)=0$.  Substitute (\ref{v}) and (\ref{w}) into this equality and simplify using (\ref{strut9}) and (\ref{use2}).\\

\noindent \textit{Proof of (4):} As $V^{\perp}$ is totally geodesic, $g(\n_U W+\n_W U,V)=0$.  Substitute (\ref{v})-(\ref{u}) into this equality and simplify using (\ref{strut9}) and (\ref{use2}).\\

\noindent \textit{Proof of (5):} Use $Y_3=\cos(\psi)V-\sin(\psi)W$ and (1) and (3) to deduce $\cos(\psi)V(\psi)=0$.  Conclude that (5) holds when $\cos(\psi)\neq 0$.  Apply the derivation $V$ to the last equality to deduce $0=V(\cos(\psi)V(\psi))=-\sin(\psi)V(\psi)^2+\cos(\psi)V(V(\psi)).$  Conclude that (5) also holds when $\cos(\psi)=0$.\\

\noindent \textit{Proof of (6):} As $V^{\perp}$ is totally geodesic and the framing $\{V,W,U\}$ is orthonormal, $0=g(-\n_U U,V)=g(U,\n_U V)=g(U,\n_U V)-g(U, \n_V U)=g(U, [U,V])$.\\

\noindent \textit{Proof of (7):} By (6), $[U,V]$ lies in the span of $W$ and $V$. By (3) and (5), $W$ and $V$ annihilate $\psi$. Hence, $[U,V]$ annihilates $\psi$.  Therefore, $0=[U,V](\psi)=U(V(\psi))-V(U(\psi))=-V(U(\psi))$.\\

\noindent \textit{Proof of (8):} By Proposition \ref{key2}, $[Y_3,V]=0$. By (2), $Y_3(\nu)=-2\epsilon^{-1}$.  Therefore $Y_3(V(\nu))=V(Y_3(\nu))=V(-2\epsilon^{-1})=0$.\\

\noindent \textit{Proof of (9):} Suppose to the contrary that there is an open ball $B \subset \mathcal{O}$ on which $\psi \equiv \pi/2$.  By (\ref{w}), $W=-Y_3$ on $B$. By equalities (4) and (2), the following absurdity holds on $B$: $$0=U(\psi)+\sin(\psi)W(\nu)+(2-2\epsilon^{-1})\sin^2(\psi)=W(\nu)+(2-2\epsilon^{-1})=2.$$

\noindent \textit{Proof of (10):} By equality (2), $0=V(Y_3(\nu))$.  As $Y_3=\cos(\psi)V-\sin(\psi)W,$ $$0=V([\cos(\psi)V(\nu)-\sin(\psi)W(\nu)]).$$  Solve for $\sin(\psi)W(\nu)$ in (4) and substitute into the equality above to derive $$0=V([\cos(\psi)V(\nu)+U(\psi)+(2-2\epsilon^{-1})\sin^2(\psi)]).$$  Use equalities (5) and (7), to conclude that $\cos(\psi)V(V(\nu))=0$.  Equality (10) now follows from (9).\\ \qed\\

Define $f: \mathcal{O} \rightarrow \mathbb{R}$ by $f=V(\nu)\sin(\psi)+(\epsilon^{-1}-1)\sin(2\psi).$ Use (\ref{use2}), (\ref{v}), (\ref{u}), and Lemma \ref{equalities}-(5) to derive $\n_{V} V = fU.$   Therefore, $$\omega(\cdot)=g(fU,\cdot)$$ on $\mathcal{O}.$  We conclude this section with the proof of Theorem \ref{main2}.\\

\noindent \textit{Proof of Theorem \ref{main2}.} By Lemma \ref{exact}, we must show that $d\omega=0$ on $M$. It suffices to prove that $d\omega=0$ on $\mathcal{O}$ since by (\ref{use2}), $\omega=0$ on the interior of $M \setminus \mathcal{O}$.  

By Lemma \ref{equalities}-(6), $$d\omega(V,U)=Vg(fU,U)-Ug(fU,V)-g(fU,[V,U])=V(f).$$  Use Lemma \ref{equalities}-(5,10) to conclude that $d\omega(V,U)=0.$  By Proposition \ref{key2}, $$d\omega(Y_3,U)=Y_3g(fU,U)-Ug(fU,Y_3)-g(fU,[Y_3,U])=Y_3(f).$$ Use Lemma \ref{equalities}-(1,8) to conclude that $d\omega(Y_3,U)=0.$  By Proposition \ref{key2}, $$d\omega(Y_3,V)=Y_3g(fU,V)-Vg(fU,Y_3)-g(fU,[Y_3,V])=0.$$

As the vector fields $\{V,U,Y_3\}$ are linearly independent over $\mathcal{O}$, $d\omega=0$ on $\mathcal{O}$, concluding the proof.  
\qed

\end{document}